\numberwithin{equation}{section}
\theoremstyle{plain}
    \newtheorem{thm}{Theorem}[section]
    \newtheorem{lem}[thm]{Lemma}
    \newtheorem{prop}[thm]{Proposition}
    \newtheorem{prob}[thm]{Problem}
    \newtheorem{conj}[thm]{Conjecture}
\theoremstyle{definition}
\def\Coker{\mathrm{Coker}}
\def\dR{{\mathrm{d\hspace{-0.2pt}R}}}            
\def\Image{{\mathrm{Im}}}        
\def\Ext{{\mathrm{Ext}}}
\def\ker{{\mathrm{Ker}}}          
\def\End{{\mathrm{End}}}
\def\reg{{\mathrm{reg}}}          %
\def\bA{{\mathbb A}}
\def\C{{\mathbb C}}
\def\P{{\mathbb P}}
\def\Q{{\mathbb Q}}
\def\R{{\mathbb R}}
\def\Z{{\mathbb Z}}
\def\cA{{\mathscr A}}
\def\cM{{\mathscr M}}
\def\a{\alpha}\def\b{\beta}
\def\k{\kappa}\def\z{\zeta}
\def\lra{\longrightarrow}
\def\hra{\hookrightarrow}
\def\ot{\otimes}
\def\op{\oplus}
\def\wt#1{\widetilde{#1}}
\def\ol#1{\overline{#1}}
\def\os#1#2{\overset{#1}{#2}}
\def\Aut{\mathrm{Aut}}
\begin{document}
\title[Special values of hypergeometric functions ${}_3F_2$]
{An algebro-geometric study of special values of hypergeometric functions ${}_3F_2$}
\author[M. Asakura, N. Otsubo and T. Terasoma]{Masanori Asakura, Noriyuki Otsubo and Tomohide Terasoma}
\address{Department of Mathematics, Hokkaido University, Sapporo, 060-0810 Japan}
\email{asakura@math.sci.hokudai.ac.jp}
\address{Department of Mathematics and Informatics, Chiba University, Chiba, 263-8522 Japan}
\email{otsubo@math.s.chiba-u.ac.jp}
\address{Graduate School of Mathematical Sciences, The University of Tokyo, Tokyo, 153-8914 Japan}
\email{terasoma@ms.u-tokyo.ac.jp}
\date{\today}
\subjclass[2000]{14D07, 19F27, 33C20 (primary), 11G15, 14K22 (secondary)}
\keywords{Periods, Regulators, Complex multiplication, Hypergeometric functions}
\dedicatory{To Professor Shuji Saito}

\begin{abstract}

For certain class of hypergeometric functions ${}_3F_2$ with rational parameters, 
we give a sufficient condition for the special value at $1$ to be expressed in terms of logarithms of algebraic numbers. We give two proofs, both of which are algebro-geometric and related to higher regulators. 
\end{abstract}

\maketitle
\section{Introduction}

Special values of hypergeometric functions ${}_pF_q$ are sometimes expressed as an elementary function of their parameters. For example, we have the Euler--Gauss formula
$${}_2F_1\left({a, b\atop c};1\right)=\frac{B(c,c-a-b)}{B(c-a,c-b)}.$$
Here, $B(a,b)=\int_0^1t^{a-1}(1-t)^{b-1}\, dt$ is the beta function. 
In this paper, we study the special values of ${}_3F_2$-functions
\begin{equation}\label{1}
B(a,b)\cdot {}_3F_2\left(
\begin{matrix}
a,b,q\\
a+b,q+1\end{matrix};1
\right)
\end{equation}
for non-integral rational numbers $a,b,q$. 
There is a very classical formula of Watson \cite{watson} (see also \cite[p.98, Example 9]{Bailey}) 
\[
2B(a,b)\cdot {}_3F_2\left(
\begin{matrix}
a,b,\frac{a+b-1}{2}\\
a+b,\frac{a+b+1}{2}\end{matrix};1
\right)=
\psi\left(\frac{a+1}{2}\right)
+\psi\left(\frac{b+1}{2}\right)
-\psi\left(\frac{a}{2}\right)
-\psi\left(\frac{b}{2}\right),
\]
where $\psi(x)=\Gamma'(x)/\Gamma(x)$ is the digamma function. 
In view of Gauss' formula on the values of $\psi(x)$ at rational numbers 
(see \cite[1.7.3, p.18--19]{bateman}), 
Watson's formula implies that, 
when $q=\frac{a+b-1}{2}$, the value \eqref{1} is a $\ol\Q$-linear combination of
of finitely many
$\log\alpha$ with $\alpha\in\Q(\mu_\infty)$.

On the other hand, the recent works \cite{a-o-1}, \cite{a-o} by the first and second authors
show that the value \eqref{1} 
appears as Beilinson's regulator on the motivic cohomology of ``hypergeometric fibrations" (see Theorem \ref{a-o-thm}), 
which is an algebro-geometric invariant related conjecturally with special values of $L$-functions. 
Under a certain geometric assumption concerning the Hodge type, 
the regulator is written in terms of logarithms.
Hence one obtains a sufficient condition for 
\eqref{1} to be written in terms of the logarithms of algebraic numbers, which is the main result of this paper Theorem \ref{main}. 
After the works mentioned above, the third author pointed out that 
the theorem can also be deduced from the study of Fermat surfaces. 
In this paper, we explain both methods, as each one has its advantage and would be useful for future studies.  

The class of $(a,b,q)$ we consider is wider than Watson's formula (see Section 5). 
For example, one shows
\begin{equation*}
2\pi\cdot{}_3F_2\left(
\begin{matrix}\frac{1}{6},\frac{5}{6},\frac{1}{4}\\
1,\frac{5}{4}\end{matrix};1\right)=
\frac{12^{\frac{3}{4}}}{2}\cdot
\log\left(\frac{3^{\frac{5}{4}}-3^{\frac{3}{4}}+\sqrt{2}}{3^{\frac{5}{4}}-3^{\frac{3}{4}}-\sqrt{2}}\right)
-12^{\frac{3}{4}}\cdot \mathrm{Cos}^{-1}\left(\frac{3^{\frac{5}{4}}+3^{\frac{3}{4}}}{2\sqrt{5+3\sqrt{3}}}\right).
\end{equation*}
Here appear, contrary to Watson's formula, the logarithms of non-cyclotomic numbers. 
See also the examples \eqref{ex yabu}.

This paper is organized as follows.
The main theorem is stated in Section \ref{main-sect}.
We give two proofs of the main theorem
 in Section \ref{proof-sect} and Section \ref{fermat-sect}.
The first proof, due to the first and second
authors, uses the regulator of hypergeometric fibrations. 
The second one, due to the third author, uses the regulator of Fermat surfaces.
In Section \ref{problem-sect}, open questions are discussed.

\subsection*{Notations}
Throughout this paper, $\Gamma(s)$ and $B(s,t)$ denote the gamma and beta functions, respectively. The hypergeometric function ${}_3F_2$ is defined by 
$${}_3F_2\left({a,b,c \atop d,e};x\right)=\sum_{n=0}^\infty \frac{(a)_n(b)_n(c)_n}{(d)_n(e)_nn!}x^n, \quad (a)_n=\prod_{i=0}^{n-1}(a+i).$$
It converges as $x \to 1^-$ if and only if $d+e-a-b-c>0$. We write 
$$\Gamma \left({a_1,\dots, a_m \atop b_1,\dots, b_n}\right) = \frac{\prod_{i=1}^m \Gamma(a_i)}{\prod_{j=1}^n \Gamma(b_j)}.$$
For a positive integer $N$, $\mu_N \subset \ol\Q^\times$ denotes the group of $N$th roots of unity.

\section{Main Theorem}\label{main-sect}

For $x \in \Q$, $\{x\}:=x-\lfloor x\rfloor$ denotes the fractional part. 
The map $\{-\}:\Q\to[0,1)$ factors through $\Q/\Z$, which we denote by
the same notation. Let 
$\hat{\Z}=\varprojlim_N\Z/N\Z$ be the profinite completion and
$\hat{\Z}^\times=\varprojlim_N(\Z/N\Z)^\times$ the group of units.
The ring $\hat{\Z}$ acts naturally on the additive group $\Q/\Z$, and induces an isomorphism $\hat{\Z}^\times\cong \Aut(\Q/\Z)$.

Our main theorem is the following. 
\begin{thm}\label{main}
Let $a,b,q\in\Q$ such that $a,b,q, q-a,q-b,q-a-b \not\in \Z$.
Assume that
\begin{equation}\label{main-cond}
\{sq\}+\{s(q-a-b)\}=\{s(q-a)\}+\{s(q-b)\}, \quad s\in\hat{\Z}^\times.
\end{equation}
Then we have
\begin{equation}\label{main-result}
B(a,b)\cdot {}_3F_2\left(
\begin{matrix}
a,b,q\\
a+b,q+1\end{matrix};1
\right)
\in\ol{\Q}+\ol{\Q}\log\ol{\Q}^\times.
\end{equation}
Here, $\ol{\Q}+\ol{\Q}\log\ol{\Q}^\times$ denotes the $\ol\Q$-linear subspace of $\C$ spanned by
$1$, $2\pi i$ and $\log \alpha$ for all $\alpha\in\ol{\Q}^\times$.
\end{thm}
We note that the action of $\hat{\Z}$ on the subgroup $\frac{1}{N}\Z/\Z$
factors through the finite quotient $(\Z/N\Z)^\times$.
Therefore, taking $N$ so that $a,b,q\in \frac{1}{N}\Z$, the assumption \eqref{main-cond} is verified by taking as $s$ the integers $1,2,\dots, N-1$ prime to $N$. 
When $q=\frac{a+b}{2}$, the assumption is satisfied since $\{x\}+\{1-x\}=1$ for any $x \in \R\setminus \Z$. 
Since $\eqref{main-cond}$ is also written as 
$$\{sq\}+\{s(q-a-b)\}+\{s(a-q)\}+\{s(b-q)\}=2, \quad s\in\hat{\Z}^\times, $$
the condition is symmetric in $\{q,q-a-b,a-q,b-q\}$. 
As well as the assumption, the conclusion of the theorem depends only on the classes of $a$, $b$, $q$ mod $\Z$. This is because of the functional equation of the beta function, e.g. $(a+b)B(a+1,b)=aB(a,b)$, and the contiguous relations among ${}_3F_2$-functions (see \cite[Section 7.3]{a-o}).  
The latter is the reason why we need to consider the values in $\ol\Q+\ol\Q\log\ol\Q^\times$, not only in $\ol\Q\log\ol\Q^\times$. 

By using Thomae's formula (see \cite[Ch. III, 3.2 (1)]{Bailey}) repeatedly, we obtain other expressions of \eqref{1} as follows:  
\begin{align*}
&B(a,b)\cdot {}_3F_2\left(
\begin{matrix}
a,b,q\\
a+b,q+1\end{matrix};1
\right)  \notag
\\&=\frac{q}{ab}\cdot{}_3F_2\left(
\begin{matrix}
1,1,a+b-q\\
a+1,b+1\end{matrix};1
\right)\quad (q>0)
\\&=
\frac{1}{a}\cdot {}_3F_2\left({a,q+1-b,1 \atop a+1,q+1};1\right) \quad (b>0)
\\&=
\frac{1}{b} \cdot {}_3F_2\left({b,q+1-a,1 \atop b+1,q+1};1\right) \quad (a>0)
\\&=
\frac{B(a,b)}{aB(a,q+1-a)}\cdot {}_3F_2\left({a,a,a+b-q \atop a+1,a+b};1\right) \quad (q+1-a>0) 
\\&=
\frac{B(a,b)}{bB(b,q+1-b)}\cdot{}_3F_2\left({b,b,a+b-q \atop b+1,a+b};1\right) \quad (q+1-b>0)
\\&=
\frac{B(a,b)}{qB(a+b-q,q)}\cdot{}_3F_2\left({q+1-a,q+1-b,q \atop q+1,q+1};1\right) \quad (a+b-q>0), 
\end{align*}
where the positivity condition in each line is needed for the convergence.  

\section{First Proof: Hypergeometric Fibrations}\label{proof-sect}

We derive Theorem \ref{main} from the regulator formula in \cite{a-o} for what we call hypergeometric fibrations. In Subsections 3.1 and 3.2, we recall necessary materials from \cite{a-o}. 

\subsection{Hypergeometric fibrations}

Let $X$ be a smooth projective variety over $\ol\Q$ and 
$f\colon X\to\P^1$ be a surjective morphism. 
Let $t$ be the coordinate of $\bA^1 \subset \P^1$, $X_t$ be the general fibre of $f$, and 
$H^*(X_t,\Q)$ denote the Betti cohomology of $X_t(\C)$. 
Let $R_0$ be a semi-simple finite-dimensional $\Q$-algebra and $e_0\colon R_0 \to E_0$ be a projection onto a number field.  
We say that $f$ is a {\em hypergeometric fibration} with respect to $e_0$ if the following conditions are satisfied: 
\begin{enumerate}
\item[(a)]
$f$ is smooth over $\P^1\setminus\{0,1,\infty\}$.
\item[(b)]
After restricting to a non-empty Zariski open subset of $\P^1$, there is a ring homomorphism $R_0 \to \End (R^1f_*\Q)$, 
such that $$\dim_{E_0}e_0H^1(X_t,\Q)=2,$$
where we put $e_0M=E_0\ot_{R_0,e_0}M$ for an $R_0$-module $M$. 
\item[(c)]
The local monodromy $T_1$ at $t=1$ on $e_0H^1(X_t,\Q)$ 
is unipotent, and 
$$\mathrm{rank}(\log T_1)=[E_0:\Q].$$
\end{enumerate}

For each embedding $\chi \colon E_0\hra\ol\Q$,  
let $(R^1f_*\ol\Q)^\chi$ denote the $\chi$-part which is by definition the
subspace on which $g\in E_0$ acts as multiplication by $\chi(g)$. 
Let $T_p$ be the local monodromy at $t=p \in \{0,\infty\}$ 
on the rank-two $\ol\Q$-local system $(R^1f_*\ol\Q)^{\chi}$. 
Then the eigenvalues of $T_0$ (resp. $T_\infty$) are written as
$e^{2\pi i\alpha^\chi_1}$, $e^{2\pi i\alpha^\chi_2}$  
(resp. $e^{2\pi i\beta^\chi_1}$, $e^{2\pi i\beta^\chi_2}$),
where $\a_i^\chi, \beta_i^\chi \in \Q$. 

\subsection{Regulator formula}
Now, take a positive integer $l$, and let $\pi\colon\P^1\to \P^1$ be the map given by $\pi(t)=t^l$.
We consider the variation of Hodge--de Rham structures 
$$\cM^{(l)}:=\pi_*\Q\ot R^1f_*\Q,$$
and the cohomology groups
\begin{equation*}\label{hm-def}
H^{(l)}:=H^1(\P^1,j_*\cM^{(l)}),\quad 
M^{(l)}:=H^1(\P^1\setminus\{0,1,\infty\},\cM^{(l)}), 
\end{equation*}
where $j\colon \P^1\setminus\{0,1,\infty\} \hookrightarrow\P^1$ is the immersion. 
Then, there is an exact sequence of mixed Hodge--de Rham structures (see \cite[Section 4.2]{a-o})
\begin{equation}\label{ext0}
0\lra H^{(l)}\lra M^{(l)}\lra C^{(l)}\lra 0, 
\end{equation}
where 
$$C^{(l)}:=\bigoplus_{p=0,1,\infty}C_p^{(l)}, \quad C_p^{(l)}:=(R^1j_*\cM^{(l)})_p
\cong\Coker\left[T_p-1\colon \cM_p^{(l)}\to\cM_p^{(l)}\right].$$
We recall that a {\em Hodge--de Rham structure} 
is a quadruple $H=(H_B,H_\dR, F^\bullet,\iota)$ of 
finite-dimensional vector spaces over $\ol\Q$,  a descending filtration of $H_\dR$, and a comparison isomorphism $H_{B,\C} \os\cong\to H_{\dR,\C}$ satisfying standard properties (see \cite[Section 2.1]{a-o}).   

Since $\Aut(\pi)=\mu_l$, the group ring $R:=R_0[\mu_l]$ acts on the exact sequence \eqref{ext0}. 
Let $e \colon R\to E$ be a projection onto a number field $E$ which extends $e_0\colon R_0 \to E_0$. 
For each embedding $\chi \colon E \hra \ol\Q$, define $k^\chi\in\Z/l\Z$ by 
$\chi(\zeta)=\zeta^{k^\chi}$ for $\zeta\in\mu_l$, and put $\kappa^\chi =k^\chi/l \in \Q/\Z$. 
We write the restriction of $\chi$ to $E_0$ by the same letter and then $\a_i^\chi$, $\b_i^\chi \in \Q/\Z$ are defined as above. 

Now we suppose:
\begin{equation}\label{cond abk}
\k^\chi+\a_1^\chi, \ \k^\chi+\a_2^\chi, \ \k^\chi-\b_1^\chi, \ \k^\chi-\b_2^\chi\ \not\in\Z.
\end{equation}
Then, it is not hard to show that 
$eC^{(l)}=eC_1^{(l)}$, $\dim_EeC^{(l)}=\dim_EeH^{(l)}=1$, and that 
$eC^{(l)}$ (resp. $eH^{(l)}$) is a pure Hodge structure of type $(2,2)$ (resp. of weight $2$) (see \cite[Section 4.3]{a-o}). 
By an identification $eC^{(l)}=E(-2)$, we obtain from \eqref{ext0} an exact sequence
$$0 \lra eH^{(l)}(2) \lra eM(2) \lra E \lra 0.$$
Throughout the remaining of this section, write for brevity $H=eH^{(l)}$. 
We have the connecting homomorphism
$$\rho\colon E\lra \Ext^1(\Q, H(2))$$
to the Yoneda extension group of mixed Hodge--de Rham structures. 
Denote by $H^\chi=(H_B^\chi,H_\dR^\chi,F^\bullet,\iota)$ the $\chi$-part of $H$, i.e.
the subspace on which each $\sigma\in G$ acts as multiplication by $\chi(\sigma)$.
The period $\mathrm{Per}(H^\chi) \in \C^\times/\ol\Q^\times$ in the sense of Deligne \cite{deligne} is defined by $\iota(H_{\dR}^\chi)=\mathrm{Per}(H^\chi)H_B^\chi$. 
Choose a $\ol\Q$-basis $\eta$ of $(eH_\dR^{(l)})^\chi$, 
and let $i_\eta$ be the composition of the following maps:
\begin{align*}
\Ext^1(\Q,H(2))
&\os{\cong}{\lra} H_{\dR,\C}/\left(F^2H_\dR+\iota(H_B(2))\right)\\
&\lra H_{\dR,\C}^\chi/\left(F^2H^\chi_\dR+\iota (H_B^\chi(2))\right)\\
&\os{\cong}{\lra} \C/\left(\ol\Q\delta_\chi+\ol\Q\mathrm{Per}(H^{\ol\chi})\right),
\end{align*}
where we put $\delta_\chi=0$ or $1$ depending on whether $F^2H_\dR^\chi=0$ or not, 
and $\ol\chi$ is the complex conjugate of $\chi$.  
Here, the first map is the Carlson isomorphism, the second map is the projection to the $\chi$-part, and the last isomorphism sends $\eta$ to $1$. Note that 
$\mathrm{Per}(H^\chi)\cdot\mathrm{Per}(H^{\ol\chi}) \in(2\pi i)^2\ol\Q$. 
Put $\rho^\chi = i_\eta\circ \rho$. 

Now, our regulator formula is the following. 
\begin{thm}[\cite{a-o}, Theorem 4.7]\label{a-o-thm}
Let the notation and assumption be as above. 
Then there exist $c_1, c_2 \in\ol\Q$, $c_2\ne 0$, such that 
\begin{equation}\label{main-regterm}
\rho^{\ol\chi}(1)
=c_1+c_2
B(\alpha^\chi_1+\beta^\chi_1,\alpha^\chi_1+\beta^\chi_2)~
{}_3F_2\left(\begin{matrix}\alpha^\chi_1+\beta^\chi_1,\alpha^\chi_1+\beta^\chi_2,
\k^\chi+\alpha^\chi_1\\
2\alpha^\chi_1+\beta^\chi_1+\beta^\chi_2,\k^\chi+\alpha^\chi_1+1\end{matrix};1
\right)
\end{equation}
in $\C/\left(\ol\Q\delta_\chi +\ol\Q\mathrm{Per}(H^\chi)\right)$. 
\end{thm}

The period formula \cite[Theorem 4.5]{a-o} (see also \cite[Theorem 5.4]{a-o-1}) reads
\begin{equation*}
\mathrm{Per}(H^\chi) \sim_{\ol\Q^\times} 
2\pi i\cdot \Gamma\left({\k^\chi+\a_1^\chi, \k^\chi+\a_2^\chi\atop \k^\chi-\b_1^\chi,\k^\chi-\b_2^\chi}\right).
\end{equation*}
Note that the second term of the right-hand side of \eqref{main-regterm} is written as $cB(a,b)\cdot {}_3F_2\left({a,b,q\atop a+b,q+1};1\right)$ by letting 
$$a=\a_1^\chi+\b_1^\chi, \ b=\a_1^\chi+\b_2^\chi, \ q=\k^\chi+\a_1^\chi.$$
Then, since $\a_1^\chi+\a_2^\chi+\b_1^\chi+\b_2^\chi \in\Z$, we have
$$
\Gamma\left({\k^\chi+\a_1^\chi, \k^\chi+\a_2^\chi\atop \k^\chi-\b_1^\chi,\k^\chi-\b_2^\chi}\right)
\sim_{\Q^\times} \Gamma\left({q, q-a-b\atop q-a,q-b}\right).
$$
By Koblitz--Ogus \cite[p.344, Theorem]{deligne}, the condition \eqref{main-cond} implies that
$$\Gamma\left({sq, s(q-a-b)\atop s(q-a),s(q-b)}\right) \in\ol\Q^\times$$
for any $s \in \hat\Z^\times$, hence $\mathrm{Per}(H^\chi) \in \ol\Q(2\pi i)$ for any $\chi$. 

\subsection{Algebraic cycles}
The connecting homomorphism $\rho^\chi$ 
is related with Beilinson's regulator map 
from the motivic cohomology group.
Consider the diagram
\[
\xymatrix{
X^{(l)}\ar[r]^i\ar[rd]_{f^{(l)}}&
X_l\ar[r]\ar[d]\ar@{}[rd]|{\square}&X\ar[d]^f\\
&\P^1\ar[r]^\pi&\P^1
}
\]
where $X_l:=X \times_{\P^1, f} \P^1$ and $i$ is the desingularization.
Put $D^{(l)}:=(\pi\circ f^{(l)})^{-1}(1)$, a union of $l$ copies of  the fiber $f^{-1}(1)$.
There are canonical isomorphisms 
\begin{align*}
&C_1^{(l)}\cong H_1(D^{(l)},\Q)(-2), \\
&H^{(l)}
\cong \ker\left[H^2(X^{(l)},\Q)/N^1_\mathrm{fib}(X^{(l)}) \to H^2(X^{(l)}_t,\Q)\right],
\end{align*}
where $N^1_\mathrm{fib}(X^{(l)})\subset $ denotes 
the classes of fibral divisors for $f^{(l)}$.
Then we have a commutative diagram \cite[Proposition 4.8]{a-o}
\[
\xymatrix{
H^3_{\cM,D^{(l)}}(X^{(l)},\Q(2))\ar[r]\ar[d]_{\reg_{D^{(l)}}}
&H^3_\cM(X^{(l)},\Q(2))\ar[d]^{\reg_{X^{(l)}}}\\
H_1^B(D^{(l)},\Q)\ar[r]
&\Ext^1(\Q, (H^2(X^{(l)},\Q)/N^1_\mathrm{fib})(2)),
}
\]
where the vertical maps are the regulators and the lower horizontal map is the connecting homomorphism induced by \eqref{ext0}. 

\begin{prop}\label{a-o-lem}
Suppose that $H=eH^{(l)}$ is a Hodge structure of type $(1,1)$. 
Then we have \/
$\Image(\rho^{\chi})\subset  \ol{\Q}\log\ol{\Q}^\times/\ol\Q \cdot 2\pi i$ 
for any embedding $\chi\colon E \hra \ol\Q$, 
\end{prop}

\begin{proof}
Recall that the target of $\rho^{\chi}$ is $\C/\ol\Q\cdot 2\pi i$ by the assumption and the remark after Theorem \ref{a-o-thm}. 
Since $\reg_{D^{(l)}}$ is surjective, it is suffices to consider the image of 
$H^3_{\cM,D^{(l)}}(X^{(l)},\Q(2))$ under $\rho^\chi$.

Let $N^r(X^{(l)})\subset H^{2r}(X^{(l)},\Q)$ be the subspace generated by
algebraic cycles of codimension $r$. Note that it is generated by cycles defined 
over $\ol\Q$.
By the assumption and Lefschetz's theorem (i.e. the Hodge conjecture for $H^2$), 
we have $eH^{(l)}\subset N^1(X^{(l)})$.
The intersection pairing $N^1(X^{(l)})\ot N^{\dim X-1}(X^{(l)})\to \Q$ is non-degenerate 
by the non-degeneracy of the pairing on the N\'eron--Severi group.
This implies that there is a smooth projective curve $C$  (not necessarily connected) 
and a morphism $C\to X^{(l)}$ such that the image of $C$ intersects properly with $D^{(l)}$, 
the pull-back $H^2(X^{(l)}) \to H^2(C)$ annihilates $N^1_\mathrm{fib}(X^{(l)})$, 
and the composition 
$$H \to H^2(X^{(l)})/N^1_\mathrm{fib}(X^{(l)},\Q)\to H^2(C,\Q)$$ 
is injective.
Then we have a commutative diagram
$$\xymatrix{
& H^3_{\cM,D^{(l)}}(X^{(l)},\Q(2))\ar[r]\ar[d]_{\rho\circ\reg_{D^{(l)}}}
&H^3_{\cM,D^{(l)}\cap C}(C,\Q(2))\ar[d]^{\rho\circ \reg_{D^{(l)}\cap C}}\\
\Ext^1(\Q,H(2))\ar[r]&\Ext^1(\Q,H^2(X^{(l)},\Q)/N^1_{\mathrm{fib}}(2))\ar[r]&\Ext^1(\Q,H^2(C,\Q(2))), 
}$$
where the composite of the lower horizontal maps is injective. 

Since $H^2(C,\Q(2))\cong\bigoplus_{C^{(0)}} \Q(1)$, 
where $C^{(0)}$ denotes the set of connected components of $C$, 
the map $\reg_{D^{(l)}\cap C}$ is canonically identified with the logarithm map
$$\bigoplus_{D^{(l)}\cap C} \Q \ot_\Z \ol\Q^\times  \lra 
\bigoplus_{C^{(0)}}\C/\Q\cdot 2\pi i.$$
Since $H\cong \Q(-1)^{\oplus[E\colon \Q]}$, we have $\Ext^1(\Q,H(2))\cong (\C/\Q\cdot2\pi i)^{\oplus[E\colon \Q]}$, 
and this injects to $\bigoplus_{C^{(0)}}\C/\Q\cdot 2\pi i$. 
Hence, taking the $\chi$-part, the lemma follows. 
\end{proof}

\begin{prop}\label{a-o-cor}
Let the notation and assumption be as in Theorem \ref{a-o-thm}.
Then $H=eH^{(l)}$ is a Hodge structure of type $(1,1)$ if and only if
\begin{equation}\label{cond chi}
\left\{\k^\chi+\alpha^\chi_1\right\}
+\left\{\k^\chi+\alpha_2^\chi\right\}
=\left\{\k^\chi-\beta^\chi_1\right\}
+\left\{\k^\chi-\beta_2^\chi\right\}
\end{equation}
for any $\chi$. 
\end{prop}

\begin{proof}
The first assertion follows from an explicit formula \cite{asakura-fresan}
of the Hodge type of $H$, 
which is proven using the Riemann--Roch--Hirzebruch theorem. 
For the hypergeometric fibration of Gauss type, 
which will be used below to prove Theorem \ref{main},  
this is computed in \cite[Theorem 5.4]{a-o-1} 
(the situation in loc. cit.  is more restricted but the same argument works in general). 
\end{proof}

\subsection{Hypergeometric fibration of Gauss type}
We finish the proof of Theorem \ref{main}. 
In view of Theorem \ref{a-o-thm}, Proposition \ref{a-o-lem} and Proposition \ref{a-o-cor}, 
it suffices to find a fibration $f$ such that 
$$a\equiv \alpha^\chi_1+\beta^\chi_1,\quad
b\equiv\alpha^\chi_1+\beta^\chi_2,\quad
q\equiv\alpha^\chi_1+\k^\chi \pmod\Z$$
for some $\chi$, and the condition \eqref{cond chi} is satisfied for any $\chi$. 
Note that the condition \eqref{main-cond} implies \eqref{cond chi} for any $\chi$. 
The non-integrality condition of Theorem \ref{main} is equivalent to  
$\a_i^\chi+\b_j^\chi \not\in\Z$ ($i, j \in \{1,2\}$) and \eqref{cond abk}. 

We may and do suppose $0<a, b, q<1$. Let $N$ be the smallest positive integer such that $A:=Na$, $B:=Nb \in \Z$. 
Consider the hypergeometric fibration of Gauss type
$$y^N=x^A(1-x)^B(1-tx)^{N-B}.$$
This is a principal example of hypergeometric fibrations studied in detail in \cite{a-o-1} and in \cite[Section 3.2]{a-o}. 
Let $\z_N\in \mu_N$ be a primitive $N$th root of unity. 
The group ring $R_0:=\Q[\mu_N]$ acts on $X$ by letting $\z_N$ act by $y \mapsto \z_N^{-1}y$. 
 Let 
$e_0\colon R_0\to E_0=\Q(\mu_N)$ 
be the natural projection. For each embedding $\chi \colon E_0 \hookrightarrow \C$, such that $\chi(\z_N)=\z_N^s$, we have
$$\a_1^\chi=0, \ \a_2^\chi=\{1-\b_1^\chi-\b_2^\chi\}, \ \b_1^\chi=\left\{\frac{sA}{N}\right\}, \ \b_2^\chi=\left\{\frac{sB}{N}\right\}.$$ 
Hence we have $a=\a_1^\chi+\b_1^\chi$, $b=\a_1^\chi+\b_2^\chi$ for the trivial embedding $\chi$ (i.e. $s=1$). 
Let $l$ be the smallest positive integer such that $Q:=lq\in\Z$ and let 
$e\colon R=R_0[\mu_l]\to E=E_0(\mu_l)$ 
be an extension of $e_0$ given by $\z_l\mapsto \z_l^Q$. Then, for the trivial embedding $\chi$ of $E$, we have $q=\k^\chi$. 
Hence this fibration has the desired property and Theorem \ref{main} is proved. 

\section{Second Proof: Fermat Surfaces}\label{fermat-sect}
We give the second proof of Theorem \ref{main}, by studying extensions of mixed Hodge--de Rham structures coming from Fermat surfaces. 
Throughout this section, we assume 
$a,b,q\in\Q$ and $a,b,q, q-a,q-b,q-a-b \not\in \Z$.

\subsection{Integral representation}
Let us begin with the integral representation of ${}_3F_2$-function (cf. \cite{slater}):
\begin{align*}
&B(\alpha_1,\beta_1-\alpha_1)B(\alpha_2,\beta_2-\alpha_2){}_3F_2\left(
\begin{matrix}
\alpha_1,\alpha_2,\alpha_3\\
\beta_1,\beta_2\end{matrix};z
\right)
\\&=\int^1_0\int^1_0
t_1^{\alpha_1-1}t_2^{\alpha_2-1}
(1-t_1)^{\beta_1-\alpha_1-1}
(1-t_2)^{\beta_2-\alpha_2-1}(1-zt_1t_2)^{-\alpha_3}\, dt_1dt_2. 
\end{align*}
Set $(\alpha_1,\alpha_2,\alpha_3,\beta_1,\beta_2)=(a,q,b,a+b,q+1)$.
By the change of variables $x=t_1$, $y=(1-t_1)/(1-zt_1t_2)$, we obtain 
\begin{equation*}
B(a,b){}_3F_2\left(
\begin{matrix}
a,q,b\\
a+b,q+1\end{matrix};z
\right)=qz^{-q}\int_{E_z}
x^{a-q-1}y^{b-q-1}(x+y-1)^{q-1}\,dxdy, 
\end{equation*}
where $E_z$ is the domain in the $xy$-plane corresponding to 
$\{(t_1,t_2)\mid 0\leq t_1,t_2\leq 1\}$.
Suppose that $a,b,q\in \frac{1}{N}\Z$.
We take new variables $u,v,w$ such that
\[
u^N=x,\quad v^N=y,\quad w^N=u^N+v^N-1=x+y-1.
\] 
Then we have
\begin{equation*}\label{f1}
B(a,b){}_3F_2\left(
\begin{matrix}
a,q,b\\
a+b,q+1\end{matrix};z
\right)=N^2qz^{-q}\int_{\Delta_z}
u^{N(a-q)-1}v^{N(b-q)-1}w^{Nq-N}\,dudv, 
\end{equation*}
where $\Delta_z$ is an arbitrary domain in the $uv$-plane which corresponds to 
$E_z$.
Substitute $z=1$ and choose the domain as
\begin{equation}\label{f-delta}
\Delta=\Delta_1:=\left\{(u,v)\in \R^2\mid 0\leq u,v\leq 1,~1\leq u^N+v^N\right\}. 
\end{equation}
Then we obtain
\begin{equation}\label{f2}
B(a,b){}_3F_2\left(
\begin{matrix}
a,q,b\\
a+b,q+1\end{matrix};1
\right)=N^2q\int_{\Delta}
u^{N(a-q)-1}v^{N(b-q)-1}w^{Nq-N}\,dudv.
\end{equation}
We shall give a motivic interpretation of this integral.

\subsection{Fermat surface}
The differential form
\begin{equation*}
\omega:=u^{N(a-q)-1}v^{N(b-q)-1}w^{Nq-N}\,dudv
\end{equation*}
defines a de Rham cohomology class $\eta\in H_\dR^2(S)$
of the Fermat surface over $\ol\Q$
\[
S:u^N+v^N-1=w^N.
\]
Let the group $G:=\mu_N^3$ act on $S$ by 
$\sigma(u,v,w)=(\zeta_1u,\zeta_2v,\zeta_3w)$ for $\sigma=(\zeta_1,\zeta_2,\zeta_3)\in G$.
This time, we first fix a $\Q$-algebra homomorphism 
$$\chi\colon\Q[G]\to \ol\Q, 
\quad \chi(\zeta_1,\zeta_2,\zeta_3)=\zeta_1^{N(a-q)}\zeta_2^{N(b-q)}\zeta_3^{Nq}.$$
Let $E$ be the coimage of $\chi$, and $e\in\Q[G]$ be the corresponding idempotent, 
i.e. $e^2=e$ and $e\Q[G]\cong E$.
Let $D$ be the union of curves on $S$ defined by 
$$(u^N-1)(v^N-1)w=0,$$
which is stable under the $G$-action.  

\begin{lem}\label{lem-f-1}
We have 
$$\dim_EeH_1(D,\Q)=1, \quad
\dim_EeH_2(D,\Q)=0,  \quad
\dim_E eH_2(S,\Q)=1.$$
Moreover, $eH_1(D,\Q)$ is a Hodge--de Rham structure of type $(0,0)$.
\end{lem}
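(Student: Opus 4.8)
The plan is to compute the $\chi$-isotypic ($e$-)part of each group directly from the $G=\mu_N^3$-action and the explicit geometry of $D$. Write $A=N(a-q)$, $B=N(b-q)$, $C=Nq\in\Z/N\Z$, so that $\chi$ is the character $(\zeta_1,\zeta_2,\zeta_3)\mapsto\zeta_1^A\zeta_2^B\zeta_3^C$; the hypotheses then read $A,B,C\neq0$, $A+C=Na\neq0$, $B+C=Nb\neq0$ and $A+B+C=-N(q-a-b)\neq0$ in $\Z/N\Z$. Decompose $D=D_u\cup D_v\cup D_w$ with $D_u=\{u^N=1\}\cap S$, $D_v=\{v^N=1\}\cap S$, $D_w=\{w=0\}\cap S$; on $S$ the first two become $v^N=w^N$ (resp. $u^N=w^N$), so $D_u,D_v$ are each a union of $N^2$ affine lines, while $D_w$ is the affine Fermat curve $u^N+v^N=1$. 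The single observation driving everything is that the third factor $\mu_N$ (scaling $w$) fixes $\{w=0\}$ pointwise, hence acts trivially on $H_*(D_w)$; since $\chi$ restricts to $\zeta_3\mapsto\zeta_3^C$ with $C\neq0$, this forces $eH_*(D_w)=0$. Moreover $D$ is a union of affine, hence non-compact, curves meeting in points, so $H_2(D)=0$ and in particular $eH_2(D)=0$.

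For $eH_1(D)$ I would use the weight filtration. The weight-$(-1)$ (geometric) graded piece is a quotient of $H_1$ of the normalization $\ol D=\bigsqcup(\text{lines})\sqcup\ol{D_w}$; the lines are contractible and $eH_1(\ol{D_w})=0$ by the remark above, so $e\,\mathrm{Gr}^W_{-1}H_1(D)=0$ and therefore $eH_1(D)$ is pure of weight $0$, i.e. of Hodge type $(0,0)$, which is the last assertion. It then remains to compute $e\,\mathrm{Gr}^W_0H_1(D)=eH_1(\Gamma)$, the $e$-part of the first homology of the dual graph $\Gamma$. Every vertex and edge of $\Gamma$ lying over $\{w=0\}$ (the component $D_w$, the star-points of $D_u,D_v$, and their meetings with $D_w$) is fixed by the third $\mu_N$ and so vanishes after applying $e$. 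What survives is the bipartite graph $\Gamma'$ whose vertices are the $N^2$ lines of $D_u$ and the $N^2$ lines of $D_v$ and whose edges are the transversal nodes $D_u\cap D_v$, a set which $G$ permutes simply transitively; thus $e\,\Q^{\mathrm{edges}}=e\,\Q[G]=K$. On the vertices $G$ acts through $\mu_N\times\mu_N$ via $(\sigma_1,\sigma_2,\sigma_3)\mapsto(\sigma_1,\sigma_2\sigma_3^{-1})$ (and symmetrically for $D_v$), so $e\,\Q^{\mathrm{vertices}}=0$ precisely because $B+C=Nb\neq0$ and $A+C=Na\neq0$. Hence $eH_1(\Gamma)=\ker(e\,\Q^{\mathrm{edges}}\to e\,\Q^{\mathrm{vertices}})=e\,\Q[G]$, one-dimensional over $K$. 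This is exactly where the hypotheses $a,b\notin\Z$ enter.

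For $eH_2(S)$ I would compactify to the smooth projective Fermat surface $\ol S:U^N+V^N-W^N-T^N=0$ containing $S$ as $\{T\neq0\}$, with complement the Fermat curve $C_\infty:U^N+V^N-W^N=0$ at infinity. Taking the $e$-part of the Gysin sequence relating $H^*(\ol S)$, $H^*(S)$ and $H^{*-1}(C_\infty)(-1)$: the curve $C_\infty$ sits in $\P^2_{[U:V:W]}$ with its $\mu_N^3$-action, and the character $(A,B,C)$ occurs in $H^1(C_\infty)$ only if $A+B+C\equiv0$; since $A+B+C\neq0$ we get $eH^1(C_\infty)=0$ and hence $eH^2(S)\cong eH^2(\ol S)$. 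Finally the classical character decomposition of Fermat-surface cohomology by Jacobi sums shows that the admissible character $(A,B,C,-(A+B+C))$, all of whose entries are nonzero by our hypotheses, contributes exactly a one-dimensional $\chi$-eigenspace to $H^2(\ol S)$, the Tate (hyperplane) part being $G$-invariant and dropping out. Dualizing gives $\dim_K eH_2(S)=1$; since $eH_2(D)=0$, the sequence \eqref{f3} yields $H=eH_2(S)$ and $\dim_K eH=1$.

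The main obstacle is the equivariant mixed-Hodge bookkeeping rather than any single hard estimate: one must justify the weight filtration and the identification of $\mathrm{Gr}^W_0H_1(D)$ with $H_1(\Gamma)$ compatibly with $G$, handle the non-transversal star-points of $D_u,D_v$ correctly (they are not ordinary nodes, but lie over $\{w=0\}$ and are therefore invisible to $e$), and keep the coimage $K$ and the idempotent $e$ straight so that each ``dimension $1$'' is genuinely a statement about $\dim_K$. Verifying that each of the six non-integrality hypotheses matches the (non)vanishing of the corresponding eigenspace—$A,B,C,A+B+C\neq0$ for $eH_2(S)$ and $A+C,B+C\neq0$ for $eH_1(D)$—is the final piece that ties the lemma together.
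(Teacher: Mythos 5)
Your proposal is correct, and at its core it rests on the same observation as the paper's proof; the difference is one of completeness rather than of method. The paper dismisses all three dimension statements with ``the former statement is an easy exercise'' and proves only the Hodge-type assertion, using the normalization sequence $H^0(\wt{D})\to H^0(\Q_E)\to H^1(D)\to H^1(\wt{D})\to 0$ together with exactly the point you single out as driving everything: the $\mu_N$-factor scaling $w$ fixes $D_w=\{w=0\}\cap S$ pointwise, and since $Nq\not\equiv 0$ this forces $eH^*(D_w)=0$, hence $eH^1(\wt{D})=0$ and $eH^1(D)$ is a quotient of the weight-zero space $eH^0(\Q_E)$. Your weight-filtration formulation is the homological dual of this; the one imprecision is that you explicitly dispose only of the weight $-1$ graded piece of $H_1(D)$, whereas an affine curve also has a weight $-2$ (puncture) part --- but that part likewise comes from $H_1$ of the affine normalization, i.e.\ from $D_w$, so the same observation kills it, just as the paper's exact sequence handles both weights at once. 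What you supply beyond the paper is the actual content of the ``easy exercise'': the dual-graph computation identifying $eH_1(D)$ with the $e$-part of the $\Q$-span of the $N^3$ nodes $D_u\cap D_v$ (a $G$-torsor, hence $\cong K$), with the star points over $w=0$ invisible to $e$ and the vertex spaces killed precisely because $a,b\notin\Z$; and the Gysin/character-theory computation $eH^2(S)\cong eH^2(\ol{S})$, one-dimensional over $K$ because none of $a-q$, $b-q$, $q$, $q-a-b$ is an integer. Both computations are correct and tie each non-integrality hypothesis to the vanishing it is responsible for; the $eH^2(S)$ computation is moreover what the paper implicitly appeals to in the proof of Lemma \ref{lem-f-4}, where $eH^2_\dR(S)$ is described by the forms $\eta_s$.
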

\begin{proof}
The former statement is an easy exercise. To see the latter, let $\pi\colon\wt{D}\to D$ be the
normalization, $\Sigma$ be the set of singular points of $D$ and put $\wt\Sigma=\pi^{-1}(\Sigma)$.
Then there is an exact sequence
$$H^0(\wt{D})\lra \Q_{\wt\Sigma}/\Q_\Sigma \lra H^1(D)\lra H^1(\wt{D})\lra 0, $$
where $\Q_\Sigma:=\mathrm{Maps}(\Sigma,\Q)$. 
This remains exact after applying $e$. 
Since $D$ is a union of rational curves and the Fermat curve of  degree $N$, 
and $(1,1,\z_3)$ acts trivially on the latter, we have $eH^1(\wt{D})=0$ by the assumption $q\not\in\Z$. 
Hence the assertion follows.
\end{proof}

Put $H=eH_2(S)$, a Hodge--de Rham structure of type $(0,-2)$, $(-1,-1)$, $(-2,0)$. 

\begin{prop}\label{prop-f-4}
The Hodge type of $H$ is $(-1,-1)$ if and only if 
$$\{sq\}+\{s(q-a-b)\}=\{s(q-a)\}+\{s(q-b)\}$$
holds for any $s\in \hat{\Z}^\times$.
\end{prop}
\begin{proof}
As is well-known, the cohomology $eH^2(S,\Q)$ is generated
by the classes of rational $2$-forms
$$\eta_s:=u^{N\{s(a-q)\}-1}v^{N\{s(b-q)\}-1}w^{N\{sq\}-N}\,dudv,\quad s\in (\Z/N\Z)^\times,$$
and $\eta_s$ belongs to the Hodge $(p_s,2-p_s)$-component, 
where
$$p_s:=\{s(q-a)\}+\{s(q-b)\}+\{-sq\}-\{s(q-a-b)\}.$$
Since $eH_2(S,\Q)$ has the Hodge type $(-1,-1)$ if and only if 
$eH^2(S,\Q)$ has the Hodge type $(1,1)$, the assertion follows. 
\end{proof}

\subsection{Extension of mixed Hodge--de Rham structures}
By Lemma \ref{lem-f-1}, we have an exact sequence
\begin{equation*}\label{f3}
0\lra H \lra eH_2(S,D;\Q) \os\partial\lra  e H_1(D,\Q) \lra 0
\end{equation*}
of mixed Hodge--de Rham structures.
As before, we have the connecting map
\begin{equation*}
\rho\colon eH_1^B(D,\Q)\lra \Ext^1(\Q,H)
\end{equation*}
to the Yoneda extension group of mixed Hodge--de Rham structures. 
Regarding $\eta \in H_\dR^2(S)^\chi$ as an element of $H_2^\dR(S)^\chi$ by the Poincar\'e duality, we obtain as before a map 
$$
i_\eta\colon \Ext^1(\Q,H)
\lra \C/\left(\ol\Q\delta_\chi+\ol\Q\mathrm{Per}(H^{\ol\chi})\right),
$$
where we put $\delta_\chi=0$ or $1$ depending on whether $F^0H_\dR^\chi=0$ or not, 
and $\ol\chi$ is the complex conjugate of $\chi$.  
One easily sees that the cycle $\Delta$ given in \eqref{f-delta} defines a homology cycle in $H_2^B(S,D;\Z)$.
Let $\delta:=\partial(e\Delta) \in eH^B_1(D,\Q)$ be the boundary.

\begin{prop}\label{prop-f-2}
Write $\rho^\chi=i_\eta\circ \rho$. Then we have
\begin{equation*}\label{f5}
\rho^\chi(\delta)=c+
\frac{1}{N^2q}
B(a,b)\cdot {}_3F_2\left(
\begin{matrix}
a,b,q\\
a+b,q+1\end{matrix};1
\right)
\end{equation*}
for some $c \in \ol\Q$ in 
$\C/\left(\ol\Q\delta_\chi+\ol\Q\mathrm{Per}(H^{\ol\chi})\right)$. 
\end{prop}

\begin{proof}
Consider the exact sequence
\[
0\lra eH^1_\dR(D)\os{h}{\lra} eH^2_\dR(S,D)\lra eH^2_\dR(S)\lra 0. 
\]
Let 
$\wt{\eta}\in eH^2_\dR(S,D)^\chi$ be the unique lifting of $\eta\in eH^2_\dR(S)^\chi$ contained in $F^1$.
Let 
$$\langle\ , \ \rangle\colon eH_2^B(S,D;\Q)\ot eH^2_\dR(S,D)\to\C, \quad 
\langle\ , \ \rangle\colon eH_1^B(D,\Q)\ot eH^1_\dR(D)\to\C$$
be the natural pairings. By Lemma \ref{lem-f-1}, the latter maps to $\ol\Q$. 
As is easily seen from the definition, we have 
$\rho^\chi(\delta)=\langle e\Delta,\wt{\eta}\rangle=\langle\Delta,\wt{\eta}\rangle$
in $\C/\left(\ol\Q\delta_\chi+\ol\Q\mathrm{Per}(H^{\ol\chi})\right)$. 
For an arbitrary lifting $\wt\eta'\in eH^2_\dR(S,D)^\chi$ of $\eta$, 
there exists $\xi \in eH^1_\dR(D)$ such that $h(\xi)=\wt\eta-\wt\eta'$.
Then we have 
$c:=\langle \Delta, \wt\eta-\wt\eta'\rangle=\langle \delta,\xi\rangle \in \ol\Q$. 
As $\wt\eta'$, we can choose the one 
represented by the \v{C}ech cocycle 
$$(0,0,\omega)\in \C_{\wt\Sigma}/\C_\Sigma \op \cA^1(\wt D)\op \cA^2(S),$$
where $\cA^q(M)$ denotes the space of smooth differential $q$-forms on $M$ with $\C$-coefficients (see \cite[Section A.1]{a-o-1}). 
Then, by \cite[Theorem A.3]{a-o-1}, we have 
$\langle\Delta,\wt\eta'\rangle=\int_\Delta \omega$ 
in $\C/\left(\ol\Q\delta_\chi+\ol\Q\mathrm{Per}(H^{\ol\chi})\right)$. 
Hence the proposition follows by \eqref{f2}. 
\end{proof}

Now, by applying a similar argument as in Proposition \ref{a-o-lem}, Theorem \ref{main} follows 
from Proposition \ref{prop-f-4} and Proposition \ref{prop-f-2}. 

\section{Open Problems}\label{problem-sect}

First, contrary to Wilson's formula, our result does not generally give an explicit formula expressing the value of ${}_3F_2$ in terms of logarithms. 

\begin{prob}
Give an explicit description of \eqref{main-result} in terms of logarithms.
\end{prob}

In the study of Hodge cycles on Fermat surfaces, Shioda \cite{shioda} gave a conjecture which determines those $(a,b,q)$ satisfying the condition \eqref{main-cond}, 
and  it was proved by Aoki \cite{aoki}.   
Up to permutations of $\{q, q-a-b, a-q, b-q\}$, those are (modulo $\Z^3$):  
\begin{equation*}
(a,b,q)=\begin{cases}
(\a,\b,\frac{\a+\b}{2}),\\
(\a,\a+\frac{1}{2},2\a),\\
(2\a+\frac{1}{3},2\a+\frac{2}{3},3\a),\\
(3\a+\frac{1}{4},3\a+\frac{3}{4},4\a),
\end{cases} \quad \a,\b\in\Q, 
\end{equation*}
except for a finite number of exceptional cases (see \cite[Appendix]{terasoma} for the list). 
Expanding the method in Section 3, Yabu \cite{yabu} computes several examples including: 
\begin{equation}\label{ex yabu}
\begin{split}
2\pi \cdot {}_3F_2\left({\frac{1}{6},\frac{5}{6},\frac{1}{3} \atop 1, \frac{4}{3}};1\right)
&=
2^\frac{1}{3}\sqrt{3} \cdot \log \alpha- 2^\frac{7}{3} \cdot \mathrm{Cot}^{-1}\beta,\\ 
3\pi \cdot {}_3F_2\left({\frac{1}{6},\frac{5}{6},\frac{2}{3} \atop 1, \frac{5}{3}};1\right)
&=
2^\frac{2}{3}\sqrt{3}\cdot \log \alpha+2^\frac{5}{3} \cdot \mathrm{Cot}^{-1}\beta,
\end{split}
\end{equation}
with
$$\alpha=\frac{(2^\frac{2}{3}-1)^2+(2^\frac{2}{3}+\sqrt{3})^2}{(2^\frac{2}{3}-1)^2+(2^\frac{2}{3}-\sqrt{3})^2}, \quad 
\beta=\frac{3+2^\frac{1}{3}+2^\frac{2}{3}\cdot3}{3}.$$
Recently, expanding the method in Section 4, the third author \cite{terasoma} solved the problem except for the exceptional cases. 

Finally, as we have seen, \eqref{main-cond} is a necessary and sufficient condition
for that $eH^{(l)}$ in (Section \ref{proof-sect}) or $eH$ in (Section \ref{fermat-sect}) 
is isomorphic to the Tate object $E\ot \Q(-1)$ or $E\ot \Q(1)$, respectively. 
If this is not the case, there is no reason for and it seems rather weird that the regulator value \eqref{main-result} is expressed in terms of logarithms of algebraic numbers. 
Hence it would be fair to raise the following conjecture. 

\begin{conj}
Under the same assumption as in Theorem \ref{main}, \eqref{main-cond} is a necessary condition for \eqref{main-result}. 
\end{conj}

\section*{Acknowledgements}
This work is supported by JSPS Grant-in-Aid for Scientific Research: 15K04769, 25400007 and 15H02048.

\end{document}